\def\mytitle{A linear time algorithm to verify strong structural controllability}
\def\confidentialstring{%
Submitted to ArXiv
on \today.}
\let\headnote=\relax
\def\mykeywords{Strong structural controllability, linear time algorithm, sparse matrix}
\let\proof\@undefined
\let\endproof\@undefined
\title{\textbf{%
\mytitle\thanks{This is the accepted version of a paper published in \textit{Proc. 53rd IEEE Conf. Decision and Control}, 2014.}%
}}
\author{Alexander Weber, Gunther Reissig and Ferdinand Svaricek%
\thanks{This work has been supported by the German Research Foundation (DFG) under grant no. RE 1249/3-2. The authors are with the University of the Federal Armed Forces Munich,
Dept. Aerospace Eng., Inst. of Control Eng. (LRT-15), D-85577 Neubiberg
(Munich), Germany.}}%
\def\href#1#2{\texttt{#2}}
\newcommand{\sA}{\mathcal{A}}
\newcommand{\sB}{\mathcal{B}}
\newcommand{\sX}{\mathcal{X}}
\newcommand{\nz}{\ast}
\newcommand{\myvspace}{\vspace{2mm}}
\begin{document}

\maketitle
\thispagestyle{empty}
\pagestyle{empty}

\makeatletter
\def\@endtheorem{\endtrivlist\unskip}
\def\endproof{\hspace*{\fill}~\popQED\par\endtrivlist\unskip}
\def\IEEEproofname{Proof}
\def\@IEEEproof[#1]{\par\noindent\hspace{0em}{\itshape #1: }}
\def\proof{\pushQED{\qed}\@ifnextchar[{\@IEEEproof}{\@IEEEproof[\IEEEproofname]}}
\def\endproof{\hspace*{\fill}~\popQED\par\endtrivlist\unskip}
\makeatother

\begin{abstract}
We prove that strong structural controllability of a pair of
structural matrices $(\sA,\sB)$ can be verified in time linear in 
$n + r + \nu$, where $\sA$ is square, $n$ and $r$ denote the number of columns of $\sA$
and $\sB$, respectively, and $\nu$ is the number of non-zero entries
in $(\sA,\sB)$. We also present an algorithm realizing this bound,
which depends on a recent, high-level method to verify strong
structural controllability and uses sparse matrix data
structures. Linear time complexity is actually achieved by
separately storing both the structural matrix $(\sA,\sB)$ and its
transpose, linking the two data structures through a third one, and
a novel, efficient scheme to update all the data during the
computations. We illustrate the performance of our algorithm using
systems of various sizes and sparsity.
\end{abstract}

\section{Introduction}
\label{s:intro}
Strong structural controllability of the pair $(\sA,\sB)$ of structural matrices $\sA \in \{0,\nz\}^{n \times n}$, $\sB \in \{0 , \nz\}^{n \times r}$ is, by definition, equivalent to the linear system 
\begin{equation}
\label{e:system}
\dot x(t) = Ax(t) + Bu(t)
\end{equation}
being controllable for \emph{all} matrices $A$ and $B$ whose positions
of the non-zero entries (zero entries) coincide with the positions of
the \begriff{$\nz$-entries} (\begriff{$0$-entries}) of $\sA$ and $\sB$,
respectively. Here, $A$ and $B$ denote matrices with real or complex
entries having the same dimension as $\sA$ and $\sB$, respectively,
$x$ denotes the real or complex valued $n$-dimensional state of
\ref{e:system} and $u$ is a real or complex valued $r$-dimensional
input signal. The system given by \ref{e:system} is controllable if
for any initial state and any terminal state, there exists an input
signal $u$ steering the system from the initial to the terminal state
\cite{Sontag98}.

Strong structural controllability of linear time-invariant systems has
been extensively studied
\cite{MayedaYamada79,ReinschkeSvaricekWend92,JarczykSvaricekAlt11,ChapmanMesbahi13}. Algorithms to test strong structural controllability of a pair $(\sA,\sB)$ have been presented in \cite{ReinschkeSvaricekWend92} and \cite{ChapmanMesbahi13} having complexity $\mathcal{O}(n^3)$ and $\mathcal{O}(n^2)$, respectively. In \cite{i12str}, an algorithm was presented without an analysis of its complexity. 

Recently,
the notion of strong structural controllability has been extended to linear time-varying systems and
characterizations in terms of the zero-nonzero pattern $(\sA,\sB)$ have been established
\cite{i12str,i13str,i13strb,i14str}. While the conditions differ, it turns out that their
verification for a time-varying system can be reduced to the verification of strong structural controllability for an auxiliary time-invariant system \ref{e:system}. This implies that algorithms originally derived to test the strong structural controllability of time-invariant systems may be also used for the time-varying case. 

In this paper, we prove that strong structural controllability can be
verified in time linear in  $n + r + \nu$, where $\nu$ is the number
of non-zero entries in $(\sA,\sB)$.
We also present an algorithm realizing this bound,
which depends on the recent, high-level method from
\cite{i12str,i14str} and uses sparse matrix data
structures. Linear time complexity is actually achieved by
separately storing both the structural matrix $(\sA,\sB)$ and its
transpose, linking the two data structures through a third one, and
a novel, efficient scheme to update all the data during the computations.

The need for fast algorithms becomes evident by the following
application of strong structural controllability. The dynamical
evolution of complex networks, such as power grids or gene regulatory
networks, is commonly studied in terms of linear systems of the form
\ref{e:system}, where the entries of $x$ denote the state of the
nodes, $A$ denotes the adjacency matrix of the underlying graph and
$B$ identifies the nodes that can be controlled from outside the
network; see e.g. \cite{LiuSlotineBarabasi11} and the references
therein. In real applications,
the entries of the matrix $A$ are not exactly known, which is why one
considers its zero-nonzero structure, encoded in the structural
matrices $\sA$ and $\sB$, instead.
The particular interest with regard to controllability of networks is
then to find a structural matrix $\sB$ with the minimum number of
columns such that the given network is strong structurally
controllable \cite{ChapmanMesbahi13,PequitoPopliKarIlicAguiar13}. This
problem was proved to be NP-hard \cite{ChapmanMesbahi13}. One way to
avoid NP-hardness is to consider the special case in which $\sB$ is
required to have precisely one $\nz$-entry
per column, which results in $\mathcal{O}(n^3)$ time-complexity
\cite{PequitoPopliKarIlicAguiar13}. Another alternative is to pose the
problem in the framework of the so-called weak structural
controllability
\cite{LiuSlotineBarabasi11,LiuSlotineBarabasi12,CommaultDion13}. However,
both alternatives suffer from severe drawbacks. Firstly, restricting
$\sB$ to some special structure may result in a minimum number of
columns that is strictly greater than the number of columns actually
required using arbitrary $\sB$. (An example is given in the present
paper.) With regard to economizing the computational effort for input
signals that solution is inappropriate. Secondly, the approach based
on weak structural controllability yields results that are correct 
for
all pairs of matrices $(A,B)$ of structure
$(\sA,\sB)$ with the possible exception of a
set of measure zero. The possible exceptions may very well be a
problem, in particular, when the parameters of the system
\ref{e:system} slowly change over time, so that the submanifold of
exceptional points may be passed over with certainty.
Therefore, there is much interest to tackle the original NP-hard problem based
on strong rather than weak controllability, and fast algorithms are in
demand.

The remainder of this paper is organized as follows. 
Having introduced some notation and terminology in Section
\ref{s:notation}, in Section \ref{s:Review} we briefly review the
method presented in \cite{i12str,i14str}.
Section \ref{s:implementation} contains the main result about the time complexity for verifying strong structural controllability and an implementable algorithm of such a test. In Section \ref{s:results}, several computational results on the performance of an implementation on various structural matrices are presented.

\section{Notation and terminology}
\label{s:notation}
The set $\{1,2,3,\ldots \}$ of natural numbers we denote by $\mathbb{N}$, the set of real and complex numbers by $\mathbb{R}$ and $\mathbb{C}$, respectively, and $\mathbb{F}$ denotes either $\mathbb{R}$ and $\mathbb{C}$. For $a,b \in \mathbb{N}$, $a\leq b$, we write $\intcc{a;b}$ and $\intco{a;b}$ for the set $\{a,a+1,\ldots,b\}$ and $\{a,a+1,\ldots,b-1\}$, respectively. For the $i$-th entry of $y \in \mathbb{N}^m$ we write $y(i)$ ($1\leq i\leq m$). Moreover, we write $y \in \intcc{a;b}^m$ if $y(i) \in \intcc{a;b}$ for all $i$.

$\sX$ stands for a \begriff{structural matrix}, i.e. $\sX \in \{0,\nz\}^{n\times m}$. We say that a matrix $X \in \mathbb{F}^{n \times m}$ \begriff{has the non-zero structure of} $\sX$ if $X_{i,j} \neq 0$ is equivalent to $\sX_{i,j} = \nz$ for any $i,j$. Here and subsequently, $X_{i,j}$ ($\sX_{i,j}$, respectively) denotes the entry in the $i$-th row and $j$-th column of $X$ ($\sX$, respectively). $\sA$ and $\sB$ denote structural matrices of dimension $n \times n$ and $n \times r$, respectively. 
The transpose of $\sX$ is denoted by $\sX^T$. For a structural matrix $\sX$ we introduce the following sets. For $j \in \intcc{1;m}$ we define $$\operatorname{NZR}_{\sX}(j)\defas \{ i \in \intcc{1;n} \ | \ \sX_{i,j} = \nz \}.$$ The above set indicates the rows of $\sX$ that have a $\nz$-entry in the $j$-th column. For reviewing the results in \cite{i12str} as outlined in the introduction, we define for a set $V \subseteq \intcc{1;n}$ the set $$\operatorname{NZC}_{\sX}(V)\defas \{ j \in \intcc{1;m} \ | \ \exists_{i \in V} : \ \mathcal{X}_{i,j} = \nz \}.$$ 
Throughout the paper, however, we will omit the subscript $\sX$ as it will be obvious from the context to which matrix the sets are related. 

\section{Review of the method to be implemented}
\label{s:Review}
In this section, we state the method for testing strong structural controllability as given in \cite{i12str} for which we will give an implementable algorithm in the subsequent section. The test consists of computing the set $V$ as specified in \ref{fig:alg:1} for both $L = 0$ and $L=1$. (We adopted the formulation of the test as presented in \cite{i14str}.) For convenience of the reader, we will briefly indicate the role of the two runs by stating the theorem which implies the correctness of the method. 
\begin{definition}
The pair $(\sA,\sB)$ of structural matrices is \begriff{strong structurally controllable for $\lambda \in \mathbb{C}$} if the matrix $(\lambda \operatorname{id} - A, B)$ has full rank for all pairs of matrices $(A,B) \in \mathbb{F}^{n \times (n+r)}$ that have the non-zero structure of $(\sA,\sB)$. Here, $\operatorname{id}$ denotes the $n\times n$ identity matrix.
\end{definition}
A consequence of the well-known Hautus criterion (e.g.\linebreak \cite[Lemma 3.3.7]{Sontag98}) is that the pair $(\sA,\sB)$ is strong structurally controllable if and only if it is strong structurally controllable for all $\lambda \in \mathbb{C}$. Based on this fact, the following theorem has been proved in \cite{MayedaYamada79}.

\begin{theorem}
Consider the following conditions for the structural matrix $\sX = (\sA,\sB)$:
\begin{asparaenum}
\item[(${G}_0$)]{ For every non-empty subset $V \subseteq \intcc{1;n}$ of row indices of $\sX$ there exists a column index $v \in \intcc{1;n+r}$ such that $V \cap \operatorname{NZR}(v)$ is a singleton,
\label{th:maincitedthm:condition:0}
}
\item[(${G}_1$)]{ For every non-empty subset $V \subseteq \intcc{1;n}$ of row indices of $\sX$ that satisfies $V \subseteq \operatorname{NZC}(V)$ there exists $v \in \intcc{1;n+r} \setminus V$ such that $V \cap \operatorname{NZR}(v)$ is a singleton.
\label{th:maincitedthm:condition:1}
}
\end{asparaenum}
Condition ($G_0$) holds if and only if $\sX$ is strong structurally controllable for $\lambda = 0$. Analogously, condition ($G_1$) holds if and only if $\sX$ is strong structurally controllable for every $\lambda \in \mathbb{C} \setminus \{0\}$. In particular, $\sX$ is strong structurally controllable if and only if both ($G_0$) and ($G_1$) hold.
\end{theorem}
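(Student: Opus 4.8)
The plan is to deduce the theorem from the consequence of the Hautus criterion just quoted, which reduces strong structural controllability of $\sX=(\sA,\sB)$ to strong structural controllability for $\lambda=0$ and for every $\lambda\in\mathbb{C}\setminus\{0\}$ separately, and then to characterise each of these two cases combinatorially. The common starting point is that a matrix $M\in\mathbb{C}^{n\times(n+r)}$ fails to have (full row) rank $n$ precisely when there is a nonzero row vector $q\in\mathbb{C}^{1\times n}$ with $qM=0$; for $M=(\lambda\operatorname{id}-A,B)$ this means $qA=\lambda q$ and $qB=0$, so everything hinges on the support $V\defas\{\,i\mid q_i\neq0\,\}$ of such a $q$. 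I will use repeatedly the elementary fact that, given coefficients $(q_i)_{i\in S}$ all nonzero, the equation $\sum_{i\in S}q_ia_i=c$ can be solved with every $a_i$ nonzero, provided $|S|\geq2$, or provided $|S|=1$ and $c\neq0$ — in the former case its solution set is an affine subspace of dimension $|S|-1\geq1$ and hence is not contained in the union of the coordinate hyperplanes $\{a_i=0\}$.

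For $\lambda=0$ the matrix $(0\cdot\operatorname{id}-A,B)=(-A,B)$ ranges exactly over the matrices with the non-zero structure of $\sX$, so the claim becomes: every matrix with the structure of $\sX$ has rank $n$ iff $(G_0)$ holds, and I prove both directions by contraposition. If some such matrix $X$ admits $qX=0$ with $q\neq0$, put $V=\operatorname{supp}(q)$; for each column $v$ the relation $\sum_{i\in V}q_iX_{i,v}=0$ prevents $V\cap\operatorname{NZR}(v)$ from being a singleton (a singleton would force a product of two nonzeros to vanish), so $(G_0)$ fails. Conversely, if $(G_0)$ fails, fix a nonempty $V$ meeting every column in $0$ or $\geq2$ rows, choose any $q$ supported exactly on $V$, and — since entries in distinct columns are independent parameters — set, column by column, the $\nz$-entries of column $v$ so that $\sum_{i\in V}q_iX_{i,v}=0$ using the elementary fact (the singleton case being excluded), filling the remaining $\nz$-positions with arbitrary nonzeros; then $qX=0$ for a matrix $X$ with the structure of $\sX$, so rank $n$ fails.

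For fixed $\lambda\neq0$, first suppose $(A,B)$ has the structure of $(\sA,\sB)$ and $q\neq0$ satisfies $qA=\lambda q$, $qB=0$; put $V=\operatorname{supp}(q)$. For $j\in V$ the $j$-th coordinate of $qA=\lambda q$ reads $\sum_{i\in V}q_iA_{i,j}=\lambda q_j\neq0$, so some $\sA_{i,j}=\nz$ with $i\in V$, i.e.\ $j\in\operatorname{NZC}(V)$; hence $V\subseteq\operatorname{NZC}(V)$. For every $v\in\intcc{1;n+r}\setminus V$ the corresponding equation is homogeneous — $\sum_{i\in V}q_iA_{i,v}=\lambda q_v=0$ for an $\sA$-column, or $\sum_{i\in V}q_iB_{i,v-n}=0$ for a $\sB$-column — so, as before, $V\cap\operatorname{NZR}(v)$ is not a singleton, and $(G_1)$ fails. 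Conversely, if $(G_1)$ fails, fix a witnessing nonempty $V$ with $V\subseteq\operatorname{NZC}(V)$ and no $v\in\intcc{1;n+r}\setminus V$ with $|V\cap\operatorname{NZR}(v)|=1$; take $\lambda=1$, pick $q$ supported exactly on $V$, and build $(A,B)$ column by column: the columns outside $V$ (all $\sB$-columns and the $\sA$-columns $j\notin V$) meet $V$ in $0$ or $\geq2$ rows, so their $\nz$-entries can be chosen to satisfy the homogeneous equation, whereas an $\sA$-column $j\in V$ meets $V$ (because $j\in\operatorname{NZC}(V)$) and its entries are chosen to satisfy $\sum_{i\in V}q_iA_{i,j}=\lambda q_j\neq0$ — solvable with nonzero entries by the elementary fact, the singleton case being covered by $\lambda q_j\neq0$. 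Filling the remaining $\nz$-positions with nonzeros yields $qA=\lambda q$, $qB=0$, so $\operatorname{rank}(\operatorname{id}-A,B)<n$ and $\sX$ is not strong structurally controllable for every $\lambda\in\mathbb{C}\setminus\{0\}$.

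Combining the two equivalences with the quoted Hautus consequence gives the final ``in particular'' assertion. The main obstacle I expect is the two directions that build a matrix with a prescribed structure: one must verify that $(G_0)$ and $(G_1)$ are exactly what makes the relevant families of linear equations — homogeneous for the columns outside $V$, inhomogeneous with nonzero right-hand side for the columns of $\operatorname{NZC}(V)$ lying in $V$ — simultaneously solvable within the zero/nonzero pattern, with the extra hypothesis $V\subseteq\operatorname{NZC}(V)$ in $(G_1)$ being forced on one side and used on the other precisely to keep the inhomogeneous equations solvable when $\lambda\neq0$. The remaining bookkeeping (independence of choices across columns, existence of generic nonzero solutions) is routine once the elementary fact above is in place.
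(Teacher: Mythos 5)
Your argument is correct. Note, however, that the paper itself offers no proof of this theorem: it attributes the result to Mayeda--Yamada and to the reference [i12str], so there is no in-paper argument to compare against. Your self-contained proof follows the classical line of reasoning behind those references: reduce full row rank of $(\lambda\operatorname{id}-A,B)$ to the nonexistence of a left null vector $q$, work with the support $V$ of $q$, observe that a column $v$ with $V\cap\operatorname{NZR}(v)$ a singleton forces a product of two nonzeros to vanish, and conversely build a structured counterexample column by column using the fact that a linear equation $\sum_{i\in S}q_ia_i=c$ with all $q_i\neq 0$ admits a solution with all $a_i\neq 0$ whenever $|S|\ge 2$, or $|S|=1$ and $c\neq 0$. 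The two directions, the role of $V\subseteq\operatorname{NZC}(V)$ (forced by $\lambda q_j\neq 0$ for $j\in V$ in one direction, used to keep the inhomogeneous equations solvable in the other), and the reduction via the Hautus criterion are all handled correctly, so the proposal supplies a complete and sound proof of the cited theorem.
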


The proof of the above theorem as given in \cite{i12str} proves the following theorem.
\begin{theorem}
\label{th:correctness}
The pair $(\sA,\sB)$ is strong structurally controllable 
\begin{asparaenum}[(i)]
\item for $\lambda = 0$ if and only if the algorithm in \ref{fig:alg:1} returns the empty set for $L=0$,
\item for every $\lambda \neq 0$ if and only if the algorithm in \ref{fig:alg:1} returns the empty set for $L=1$.
\end{asparaenum}
In particular, $(\sA,\sB)$ is strong structurally controllable if and only if both runs of the algorithm return the empty set.
\end{theorem}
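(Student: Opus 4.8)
The plan is to reduce each of the two equivalences in Theorem~\ref{th:correctness} to a single purely combinatorial statement about \ref{fig:alg:1}, namely that for $L\in\{0,1\}$ the algorithm returns the empty set for the parameter $L$ if and only if condition $(G_L)$ of the preceding theorem holds. Granting this, (i) and (ii) follow at once, since by that theorem $(G_0)$ characterizes strong structural controllability for $\lambda=0$ and $(G_1)$ characterizes it for every $\lambda\in\mathbb{C}\setminus\{0\}$; the final ``in particular'' assertion is then a consequence of the Hautus-type fact, already recorded above, that $(\sA,\sB)$ is strong structurally controllable exactly when it is so for all $\lambda\in\mathbb{C}$.

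For the implication ``$(G_L)$ holds $\Rightarrow$ empty output'' I would argue contrapositively. The algorithm maintains a working set of row indices, repeatedly deleting a row that is the unique element of $\operatorname{NZR}(v)$ for some admissible column $v$, and it halts exactly when no admissible column is left. Hence, if it returns a non-empty set $V^\ast$, then there is no column index $v$ --- with $v\notin V^\ast$ in the case $L=1$ --- for which $V^\ast\cap\operatorname{NZR}(v)$ is a singleton. In the case $L=1$ one also checks that $V^\ast$ inherits the property $V^\ast\subseteq\operatorname{NZC}(V^\ast)$ from the way the working set is initialized and shrunk, so that $V^\ast$ qualifies as a set to which $(G_1)$ applies. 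Either way, $V^\ast$ is a non-empty subset of $\intcc{1;n}$ violating $(G_L)$.

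For the converse, ``empty output $\Rightarrow$ $(G_L)$ holds'', I would exploit the order of deletions. Let $V_0$ be the initial working set --- all of $\intcc{1;n}$ when $L=0$ --- and write $i_1,\dots,i_p$ for the sequence of deleted rows, empty output meaning $\{i_1,\dots,i_p\}=V_0$. Let $V_k=V_0\setminus\{i_1,\dots,i_{k-1}\}$ be the working set just before $i_k$ is removed, and let $v_k$ be the column justifying that removal, so that $V_k\cap\operatorname{NZR}(v_k)=\{i_k\}$ and, when $L=1$, $v_k\notin V_k$. Given any non-empty $V\subseteq\intcc{1;n}$ --- with the extra hypothesis $V\subseteq\operatorname{NZC}(V)$ when $L=1$, so that $V\subseteq V_0$ --- pick $k$ minimal with $i_k\in V$. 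Then no element of $V$ was deleted before step $k$, hence $V\subseteq V_k$, and therefore $V\cap\operatorname{NZR}(v_k)\subseteq V_k\cap\operatorname{NZR}(v_k)=\{i_k\}$; since $i_k\in V\cap\operatorname{NZR}(v_k)$, this intersection equals $\{i_k\}$. When $L=1$ we moreover have $v_k\notin V$, as $V\subseteq V_k$ and $v_k\notin V_k$. Thus $v_k$ is a column of the kind demanded by $(G_L)$, and $(G_L)$ holds.

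I expect the case $L=1$ to be the main obstacle. Condition $(G_0)$ quantifies over all non-empty row sets and permits every column, so the peeling argument above is essentially immediate; $(G_1)$, however, only quantifies over ``self-contained'' sets $V$ with $V\subseteq\operatorname{NZC}(V)$ and forbids columns $v\in V$, and accordingly \ref{fig:alg:1} for $L=1$ plausibly uses a refined working set and deletion rule. Making both directions rigorous then hinges on a few structural facts that must be read off from the definition of the algorithm: that its output set for $L=1$ satisfies $V^\ast\subseteq\operatorname{NZC}(V^\ast)$, so the contrapositive direction produces a genuine witness against $(G_1)$; that $V\subseteq V_0$ for every self-contained $V$; and that whenever the first-deleted element of a self-contained $V$ leaves the working set, this is caused by an admissible column $v\notin V$ with $V\cap\operatorname{NZR}(v)$ a singleton. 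Once these bookkeeping points are settled, termination and independence of the returned set from the deletion order come for free, since the combinatorial equivalence shows the output is determined by $(G_L)$ alone.
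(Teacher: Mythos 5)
Your overall route is sound, but note that the paper does not actually prove Theorem~\ref{th:correctness} itself: it merely remarks that the proof of the preceding $(G_0)$/$(G_1)$ characterization, as carried out in the cited reference, already establishes the algorithmic statement. Your proposal therefore supplies what the paper delegates, namely the purely combinatorial equivalence ``\ref{fig:alg:1} returns $\emptyset$ for $L$ if and only if $(G_L)$ holds,'' after which (i), (ii) and the ``in particular'' clause follow from the preceding theorem and the Hautus-type fact exactly as you say. That reduction is correct, and it has the useful side effect you mention: since both directions hold for an arbitrary run, emptiness of the (nondeterministically produced) output is independent of the choices made.

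One point needs repair rather than mere bookkeeping. In your converse direction you write ``let $v_k$ be the column justifying that removal,'' i.e.\ you assume every deleted $i_k$ satisfies $V_k\cap\operatorname{NZR}(v_k)=\{i_k\}$ for some admissible $v_k$. For $L=1$ this is false: when $V_k\not\subseteq\operatorname{NZC}(V_k)$, \ref{fig:alg:1} deletes some $w\in V_k\setminus\operatorname{NZC}(V_k)$ with no column attached. You flag this as a fact ``to be read off,'' but it is the one step that actually requires an argument, and it should be stated: if $V\subseteq\operatorname{NZC}(V)$ and $k$ is minimal with $i_k\in V$, then $V\subseteq V_k$, hence $i_k\in\operatorname{NZC}(V)\subseteq\operatorname{NZC}(V_k)$ by monotonicity of $\operatorname{NZC}$, so $i_k\notin V_k\setminus\operatorname{NZC}(V_k)$ and the deletion of $i_k$ must have been column-justified; your singleton computation together with $v_k\notin V_k\supseteq V$ then yields $(G_1)$ for $V$. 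The remaining ``structural facts'' you list are immediate from the pseudocode: the \textbf{break} sits inside the branch guarded by $V\subseteq\operatorname{NZC}(V)$, so a non-empty output $V^\ast$ for $L=1$ is automatically self-contained with $T=\emptyset$, i.e.\ a genuine witness against $(G_1)$; and $V_0=\intcc{1;n}$ for both values of $L$, so $V\subseteq V_0$ is trivial. With these insertions your argument is complete and self-contained, which is more than the paper itself offers for this theorem.
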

\myvspace
It is important to note that although conditions ($G_0$) and ($G_1$) require verifications for every non-empty subset $V \subseteq \intcc{1;n}$, Theorem \ref{th:correctness} implies that a test of merely $n$ such subsets is sufficient. Nevertheless, a brute-force implementation of \ref{fig:alg:1} will not lead to a linear time test since the computation of the sets $T$ and $\operatorname{NZC}(V)$ is complex. 

In the following section, we present an algorithm that realizes the method given in \ref{fig:alg:1} in linear time. The key to linear time complexity is combining sophisticated data structures and sparse matrix techniques. 

\begin{figure}
\ifthenelse{\boolean{Forreview}}{\normalsize}{\small}
\algorithmicindent.6em%
\begin{algorithmic}[1]
\item[\textbf{Input}] $L$, $(\sA,\sB)$
\REQUIRE $L \in \{0,1\}$
\STATE $V \defas \intcc{1;n}$
\WHILE {$V \neq \emptyset$}
\IF {$L=0$}
\STATE
$T \defas \{ v \in \intcc{1;n+r} \ | \ |V \cap \operatorname{NZR}(v) | = 1 \}$
\label{alg1:updateT:0}
\ELSE
\STATE
$T \defas \{ v \in \intcc{1;n+r} \setminus V \ | \ |V \cap \operatorname{NZR}(v) | = 1 \}$
\label{alg1:updateT:1}
\ENDIF
\IF {$L=0$ \OR $V \subseteq \operatorname{NZC}(V)$}
\label{alg1:T0empty}
\IF {$T = \emptyset$}
\STATE \textbf{break}
\ENDIF
\STATE
Pick $v \in T$.
\STATE
$\{w\}\defas \operatorname{NZR}(v)$
\label{alg1:removefromV:1}
\ELSE 
\STATE
Pick $w \in V \setminus \operatorname{NZC}(V)$.
\label{alg1:removefromV:2}
\ENDIF
\label{alg1:line:16}
\STATE $V\defas V \setminus \{w\}$
\label{alg1:removefromV}
\ENDWHILE
\item[\textbf{Output}] $V$
\end{algorithmic}
\caption{\label{fig:alg:1}Method to test if $(\sA,\sB)$ is strong structurally controllable \cite{i12str}.}
 \vspace*{-\baselineskip}
\end{figure}
\normalsize
\begin{figure*}[t]
\centering
\psfrag{index}[][]{\small index}
\psfrag{s}[][]{\small$s$}
\psfrag{tildes}[][]{\small$\tilde s$}
\psfrag{is}[][]{\small$i_s$}
\psfrag{z}[][]{\small$z$}
\psfrag{tildez}[][]{\small$\tilde z$}
\psfrag{iz}[][]{\small$i_z$}
\psfrag{1}[][]{\small$1$}
\psfrag{2}[][]{\small$2$}
\psfrag{3}[][]{\small$3$}
\psfrag{4}[][]{\small$4$}
\psfrag{5}[][]{\small$5$}
\psfrag{6}[][]{\small$6$}
\psfrag{7}[][]{\small$7$}
\psfrag{8}[][]{\small$8$}
\psfrag{9}[][]{\small$9$}
\psfrag{10}[][]{\small$10$}
\psfrag{r1}[][]{\small row $1$}
\psfrag{r2}[][]{\small row $2$}
\psfrag{r3}[][]{\small row $3$}
\psfrag{r4}[][]{\small row\,$4$}
\psfrag{r5}[][]{\small row\,$5$}
\psfrag{r6}[][]{\small row$6$}
\psfrag{s1}[l][]{\small column $1$}
\psfrag{s2}[l][]{\small column $2$}
\psfrag{s3}[l][]{\small column $3$}
\psfrag{s4}[l][]{\small column $4$}
\psfrag{s5}[l][]{\small column $5$}
\psfrag{s6}[l][]{\small column $6$}
\psfrag{s7}[l][]{\small column $7$}
\psfrag{s8}[l][]{\small column $8$}
\includegraphics[scale=0.6]{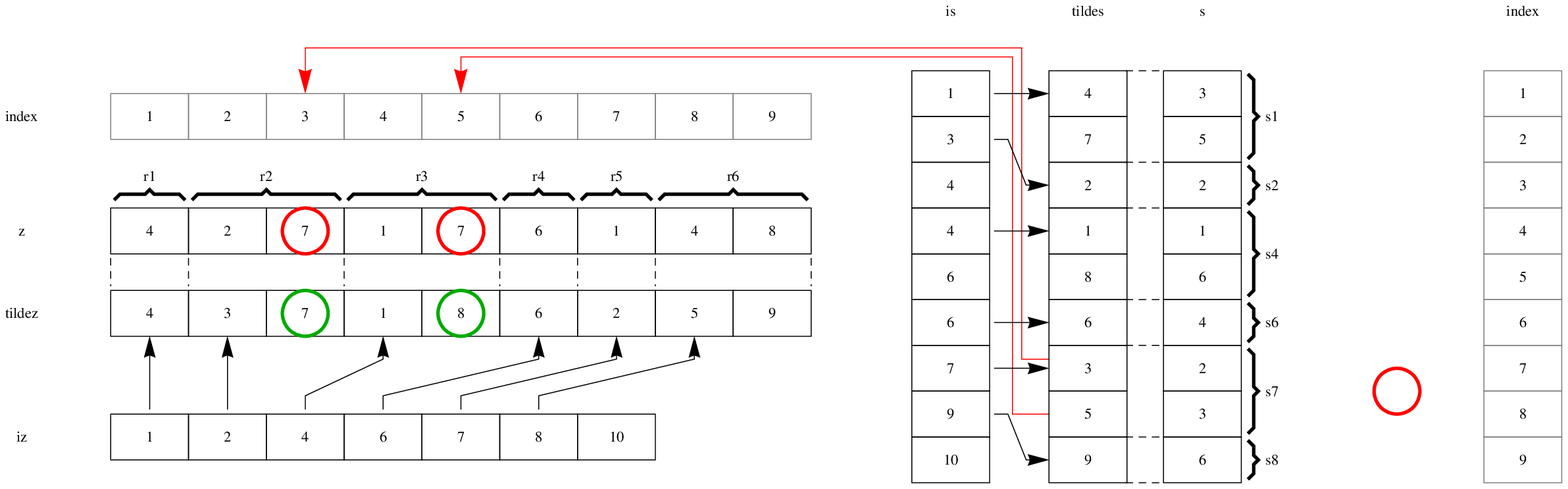}
\caption{\label{t:1} Data structures for $(\sA,\sB)$ in Example \ref{ex:ccs}. The usage of the array $\tilde s$ is indicated: The positions in $z$ whose entry is column $7$ are stored in positions $7$ and $8$ of $\tilde{s}$. $7$ and $8$ are the indices of column $7$. The entries of $\tilde z$ indicated by the green circles are those that need to be swapped in Example \ref{ex:tildes}.}
\vspace*{-\baselineskip}
\end{figure*}

\section{The main result}
\label{s:implementation}
Our main result, which claims the existence of a linear time test for strong structural controllability, is given in Section \ref{ss:main}. For its proof we give a particular algorithm for \ref{fig:alg:1}. Specifically, in Section \ref{ss:datastructures} we discuss the used data structures and the algorithm is presented in Section \ref{ss:operations}.
\subsection{Main result}
\label{ss:main}
\begin{theorem}
\label{th:main}
Let the pair of structural matrices \linebreak$(\sA,\sB) \in \{0,\nz\}^{n \times (n+r)}$ have $\nu \in \intcc{0;n(n+r)}$ $\nz$-entries. Strong structural controllability of $(\sA,\sB)$ can be verified with time complexity $\mathcal{O}(n+r+\nu)$. 
\end{theorem}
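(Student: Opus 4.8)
The plan is to exhibit a concrete implementation of the high-level procedure in \ref{fig:alg:1} whose running time is $\mathcal{O}(n + r + \nu)$, and then to invoke Theorem \ref{th:correctness} for correctness. The first observation is that the \texttt{while} loop of \ref{fig:alg:1} is executed at most $n$ times, since $V$ starts as $\intcc{1;n}$ and loses exactly one element per iteration (or the loop breaks). Hence it suffices to (a) initialise all data in $\mathcal{O}(n + r + \nu)$ time and (b) bound the work of one iteration by $\mathcal{O}(1)$ plus a term proportional to the number of $\nz$-entries of $\sX = (\sA,\sB)$ lying in the row $w$ that is removed in that iteration; since each row index is removed at most once, these extra terms sum to $\mathcal{O}(\nu)$ over the whole run.

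For (b) the quantities I would maintain incrementally are, for every column $v \in \intcc{1;n+r}$, the counter $c(v) \defas |V \cap \operatorname{NZR}(v)|$ and, derived from it, the sets $T$ (as in \ref{fig:alg:1}) and $V \setminus \operatorname{NZC}(V)$. Note that $v \in \operatorname{NZC}(V)$ iff $c(v) \geq 1$, so $V \setminus \operatorname{NZC}(V) = \{\, i \in V \mid c(i) = 0 \,\}$ and the test $V \subseteq \operatorname{NZC}(V)$ reduces to checking whether this set is empty; likewise $T$ consists of the columns with $c(v) = 1$ (case $L = 0$) or those among them that lie outside $V$ (case $L = 1$). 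Because $V$ only shrinks, $c$ is non-increasing, so each column enters and leaves $T$ and $V \setminus \operatorname{NZC}(V)$ only a bounded number of times overall. I would store $T$ and $V \setminus \operatorname{NZC}(V)$ as doubly linked lists together with arrays of back-pointers indexed by column, so that insertion, deletion and "pick an element" all cost $\mathcal{O}(1)$, store $V$ as a Boolean array, and keep the $c(v)$ in an ordinary array; the control-flow decisions of \ref{fig:alg:1} (including the emptiness test for $T$ that triggers \textbf{break}) are then $\mathcal{O}(1)$.

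The heart of the implementation is updating the $c(v)$'s and, when $c(v)$ drops to $1$, returning in $\mathcal{O}(1)$ the unique surviving row of column $v$, which is the $w$ selected in \ref{fig:alg:1}. I would keep $\sX$ in compressed-column storage and, separately, $\sX$ in compressed-row storage (equivalently, $\sX^{T}$ in compressed-column storage), linked by a third array that, for each $\nz$-entry, records its locations in both lists; inside the column list of $v$ the indices belonging to $V \cap \operatorname{NZR}(v)$ are kept as a prefix of length $c(v)$. When $w$ is removed from $V$, I traverse the (short) list of columns $v$ with $\sX_{w,v} = \nz$ using the row storage; for each such $v$ I locate the entry $(w,v)$ in $v$'s column list through the cross-link, swap it with the last active entry of that list, decrement $c(v)$, fix the cross-link of the entry that was moved, and update the memberships of $v$ in $T$ and in $V \setminus \operatorname{NZC}(V)$; finally I delete $w$ from $V$ (and from $V \setminus \operatorname{NZC}(V)$ if present) and, in the $L = 1$ case, insert column $w$ into $T$ if $c(w) = 1$. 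Each of these steps is $\mathcal{O}(1)$, so the iteration costs $\mathcal{O}(1 + |\operatorname{NZC}(\{w\})|)$ as required, and initialisation — reading $(\sA,\sB)$, building the two linked sparse structures and the cross-links, and computing the initial $c$, $T$ and $V \setminus \operatorname{NZC}(V)$ — is $\mathcal{O}(n + r + \nu)$.

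The main obstacle I anticipate is not the counting argument for the time bound but the bookkeeping of the cross-link array under the prefix-swap operation: one must verify that after every removal the prefix of each column list still equals $V \cap \operatorname{NZR}(v)$ and that all three data structures stay mutually consistent, using only a constant number of pointer updates per affected $\nz$-entry. This is precisely the ``novel, efficient scheme to update all the data'' promised in the abstract, and formalising it is where the real work lies. Once these invariants are in place, correctness of the output is immediate from Theorem \ref{th:correctness}, since the invariants guarantee that the implementation makes exactly the choices permitted by \ref{fig:alg:1} for both $L = 0$ and $L = 1$.
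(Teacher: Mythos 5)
Your proposal is correct and follows essentially the same route as the paper: the same $n$-iteration counting argument, the same data structures (compressed column and row storage of $\sX$ cross-linked by a third array, counters $c(v)$ with the active rows kept as a prefix of each column list, constant-time set structures for $T$ and $T_0 = V\setminus\operatorname{NZC}(V)$), and the same appeal to Theorem \ref{th:correctness} for correctness. The one piece you explicitly defer --- verifying that the prefix-swap leaves all three structures mutually consistent with $\mathcal{O}(1)$ pointer updates --- is exactly what the paper establishes in its Lemmas \ref{l:zssz} and \ref{l:core}, so you have correctly identified where the remaining work lies.
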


The following Sections \ref{ss:datastructures} and \ref{ss:operations} are devoted to the proof of Theorem \ref{th:main}.
Let us abbreviate the pair of structural matrices $(\sA,\sB)$ by $\sX$, and let $n$, $r$ and $\nu$ be as in the statement of Theorem \ref{th:main}. Without loss of generality, let $\nu >0$. 
\subsection{Data structures}
\label{ss:datastructures}
To obtain linear complexity in the algorithm that we present, we introduce the following sophisticated data structures. To begin with, we will store the matrices $\sX$ and $\sX^T$ \emph{separately}. The format that we use is well-known in the framework of sparse matrices \cite{BarrettEtAl94,i06Fill,i07Fill}. To provide efficient access between the data, we introduce a third, novel data structure which links those of $\sX$ and $\sX^T$. Moreover, we introduce appropriate data structures for the sets $T$ and $V$ as defined in \ref{fig:alg:1}. 
\subsubsection{Data structure for $\sX$}
The structural matrix $\sX$ is assumed to be available in the compressed column storage format (CCS) \cite{BarrettEtAl94}. The CCS-format exists in two versions, namely for ordinary matrices and for structural matrices. The latter, suitable for our purposes, consists of two integer arrays $s$ and $i_s$ of length $\nu$ and $n+r+1$, respectively. 
These arrays are defined as follows:
\begin{compactitem}[$\cdot$]
\item $i_s(j)-1$ equals the number $\nz$-entries in the first $j-1$ columns of $\sX$, and
\item  $\mathcal{X}_{i,j} = \nz$ if and only if there exists $k\in \intco{i_s(j);i_s(j+1)}$ such that $s(k) = i$.
\end{compactitem}
Note that $s \in \intcc{1;n}^\nu$ and $i_s \in \intcc{1;\nu+1}^{n+r+1}$. (See Section \ref{s:notation} for notation.) %
\begin{example}
\label{ex:ccs}
Consider the structural matrices 

\small
\begin{equation}
\label{e:ex:ccs}
\sA =\begin{pmatrix}
 0 & 0 & 0 & \nz & 0 & 0 \\
 0 & \nz & 0 & 0 & 0 & 0 \\
 \nz & 0 & 0 & 0 & 0 & 0 \\
 0 & 0 & 0 & 0 & 0 & \nz \\
 \nz & 0 & 0 & 0 & 0 & 0 \\
 0 & 0 & 0 & \nz & 0 & 0
\end{pmatrix}
\quad \text{and} \quad 
\sB = \begin{pmatrix}
0 & 0 \\
 \nz & 0 \\
 \nz & 0 \\
 0 & 0 \\
 0 & 0\\
 0 & \nz 
\end{pmatrix}. 
\end{equation}
\normalsize
The $\nz$-entries in column $1$ of $(\sA,\sB)$ are in the rows $3$ and $5$, hence $(s(1),s(2))$ may equal $(3,5)$ or $(5,3)$. We emphasize that both choices are consistent with our definition. $s(3)$ equals $2$ since the $\nz$-entry in column $2$ appears in row $2$. For the array $i_s$ we have $i_s(1) = 1$ as the first row index related to column $1$ is stored in position $1$ of $s$. $i_s(2) = 3$ since the first row index related to column $2$ is stored in position $3$ of $s$. The subsequent entries of $s$ and $i_s$ are obtained similarly. See also \ref{t:1}. 
\end{example}
\myvspace

\subsubsection{Data structure for $\sX^T$}
We store $\sX^T$ in its CCS-format and we denote the corresponding arrays by $z$ and $i_z$. Note that $z \in \intcc{1;n+r}^{\nu}$ and $i_z \in \intcc{1;\nu+1}^{n+1}$. This data structure is also known as the compressed row storage format of $\sX$ \cite{BarrettEtAl94}.

For simplicity of notation we introduce the following definition.
\begin{definition}
\label{d:index}
Let $i \in \intcc{1;n}$ and $j \in \intcc{1;n+r}$. We say that \begriff{$k \in \intcc{1;\nu}$ is an index of the row} $i$ if $k \in \intco{i_z(i);i_z(i+1)}$, and that \begriff{$l\in \intcc{1;\nu}$ is an index of the column $j$} if $l \in \intco{i_s(j);i_s(j+1)}$.
\end{definition}
\myvspace
\subsubsection{Data structures for linking the data structures of $\sX$ and $\sX^T$}
In the algorithm that we present, we will take advantage of the non-uniqueness of the array $s$ as follows. Entries in $s$ will be swapped during the execution of the algorithm in order to store additional information in the ordering of the entries of $s$ (without violating the properties of $s$ as a part of the CCS-format of $\sX$). The input of a swapping operation will be a row index $w$, and the first step will be to identify in constant time the positions $l$ in $s$ such that $s(l) =w$. The arrays $z$, $i_z$ provide the column indices $j$ such that $\sX_{w,j} = \nz$, which is the set $\{z(l) \ | \ l \in \intco{i_z(w);i_z(w+1)} \}$. However, $z$ and $i_z$ do not provide the \emph{positions} in $s$ of the entry $w$. 

In order to avoid a search operation, we introduce an integer array $\tilde z$ of length $\nu$ as follows. We define $\tilde z$ such that $\tilde{z}(l)$ equals the position in $s$ in which $w$ is stored among the row indices of the column $j=z(l)$.

Analogously, we will introduce an array $\tilde{s}$ to store the positions of the column indices in the array $z$. The array $\tilde{s}$ will be required to update $\tilde{z}$ as a swap in $s$ will require an update of $\tilde{z}$.

Before we define $\tilde{s}$ and $\tilde{z}$ formally, we identify some entries of $\tilde{z}$ for the pair $(\sA,\sB)$ as given in \ref{e:ex:ccs}.

\begin{example}
We consider the arrays $s$ and $z$ as given in \ref{t:1}. Row $2$ of $(\sA,\sB)$ has $\nz$-entries in columns $2=z(2)$ and $7=z(3)$. Among the row indices of column $2$ in $s$, row $2$ appears in position $3$, hence we set $\tilde{z}(2) = 3$. As for column $7$, row $2$ appears in position $7$ in $s$, hence $\tilde{z}(3) \defas 7$. 

Let us suppose that we had defined $z(2) = 7$ and $z(3) = 2$, so that the pair $(z,i_z)$ would still represent the pair $(\sA,\sB)$ given in \ref{e:ex:ccs}. In this case, we need to define $\tilde{z} (2)$ to equal $7$ since we require $\tilde{z}(2)$ to be an index pointing to row indices of column $z(2)$. Similarly, in this case, $\tilde{z}(3)=2$ since $2$ is an index of column $z(3)$. 
\end{example}

The integer arrays $\tilde s \in \intcc{1;\nu}^\nu$, $\tilde z \in \intcc{1;\nu}^\nu$ are formally defined by the following properties:
\begin{subequations}
\label{e:defoftildes}
\begin{align}
& \tilde s(k) \text{ is an index of the row }s(k) \text{ for all $k$, and } \label{e:tildes:a} \\
& z(\tilde s(l)) = j \text{ if $l$ is an index of the column $j$,} \label{e:tildes:b}
\end{align}
\end{subequations}
and similarly,
\begin{subequations}
\label{e:defoftildez}
\begin{align}
& \tilde z(l) \text{ is an index of the column }z(l)\text{ for all $l$, and }  \label{e:tildez:a} \\
& s(\tilde z(k)) = i \text{ if $k$ is an index of the row $i$}. \label{e:tildez:b}
\end{align}
\end{subequations}
For later purposes, we show the following lemma which may be used for an alternative definition of $\tilde s$ and $\tilde z$. It also shows the uniqueness of $\tilde s$ and $\tilde z$ for given $s$ and $z$. 
\begin{lemma}
\label{l:zssz}
Let $\tilde s_1 \in \intcc{1;\nu}^\nu$ satisfy \ref{e:tildes:a} in place of $\tilde s$, let $\tilde z_1 \in \intcc{1;\nu}^{\nu}$ satisfy  \ref{e:tildez:a} in place of $\tilde z$. Then
\begin{subequations}
\label{e:zssz}
\begin{align}
\tilde z_1(\tilde s_1(k)) &= k, \text{ and} \\
\label{e:zssz:b}
\tilde s_1(\tilde z_1(k)) &=k 
\end{align}
\end{subequations}
for any $k \in \intcc{1;\nu}$ if and only if $\tilde s_1$ and $\tilde z_1$ satisfy \ref{e:tildes:b} and \ref{e:tildez:b} in place of $\tilde s$ and $\tilde z$, respectively.
\end{lemma}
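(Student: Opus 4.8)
The plan is to prove the two stated equivalences by unwinding the definitions of "index of a row" and "index of a column," using the key combinatorial fact that the arrays $i_s$ and $i_z$ partition $\intcc{1;\nu}$ into column-blocks and row-blocks, respectively, and that the map $k \mapsto (s(k), \text{column containing } k)$ from the $s$-side and $k \mapsto (z(k), \text{row containing } k)$ from the $z$-side both enumerate exactly the set of $\nz$-positions $\{(i,j) : \sX_{i,j} = \nz\}$, each $\nz$-position occurring exactly once on each side. I would first record this as a preliminary observation: for each $k \in \intcc{1;\nu}$ there is a unique column $j = j(k)$ with $k \in \intco{i_s(j);i_s(j+1)}$, and the pair $(s(k), j(k))$ ranges bijectively over the $\nz$-positions as $k$ ranges over $\intcc{1;\nu}$; symmetrically for $z$ and the row-blocks.

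Next I would treat the direction "(\ref{e:tildes:b}) and (\ref{e:tildez:b}) $\Rightarrow$ (\ref{e:zssz})". Fix $k$ and let $j = j(k)$ be its column, so $i_s(j) \le k < i_s(j+1)$ and $s(k) = i$ for $i = s(k)$; thus $(i,j)$ is a $\nz$-position. By (\ref{e:tildes:b}), $z(\tilde s_1(k)) = j$. By (\ref{e:tildez:a}), $\tilde z_1(\tilde s_1(k))$ is an index of the column $z(\tilde s_1(k)) = j$, i.e. it lies in the same column-block $\intco{i_s(j);i_s(j+1)}$ as $k$ does. It remains to pin it down to $k$ itself: apply (\ref{e:tildez:b}) to the index $\tilde s_1(k)$, whose row is $z(\tilde s_1(k)) = j$ — wait, one must be careful about which identity (\ref{e:tildez:b}) refers to; here $\tilde s_1(k)$ is, by (\ref{e:tildes:a}), an index of the row $s(k) = i$, so $z(\tilde s_1(k)) = i$ by the analogue of the preliminary observation applied on the $z$-side, and then (\ref{e:tildez:b}) gives $s(\tilde z_1(\tilde s_1(k))) = i$. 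So $\tilde z_1(\tilde s_1(k))$ lies in column-block $j$ and has $s$-value $i$; since within a single column the $\nz$-rows are distinct (no repeated row index in one column), this forces $\tilde z_1(\tilde s_1(k)) = k$. The identity (\ref{e:zssz:b}) follows by the symmetric argument with the roles of $s,z,\tilde s_1,\tilde z_1$ interchanged (using (\ref{e:tildez:b}), (\ref{e:tildes:a}), (\ref{e:tildes:b})).

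For the converse, assume (\ref{e:zssz}). I would derive (\ref{e:tildes:b}): let $l$ be an index of column $j$, i.e. $i_s(j) \le l < i_s(j+1)$; I must show $z(\tilde s_1(l)) = j$. By (\ref{e:tildes:a}), $\tilde s_1(l)$ is an index of row $s(l)$, hence $z(\tilde s_1(l)) = s(l)$ on the $z$-side — no, that is not of the form $j$; instead observe that $\tilde s_1(l)$ is an index of some column after applying $\tilde z_1$: by (\ref{e:zssz:b}) with $k = \tilde s_1(l)$ we get $\tilde s_1(\tilde z_1(\tilde s_1(l))) = \tilde s_1(l)$, and more usefully (\ref{e:zssz}) with $k = l$ gives $\tilde z_1(\tilde s_1(l)) = l$, so $l$ is an index of the column $z(\tilde s_1(l))$ by (\ref{e:tildez:a}); since $l$ is also an index of column $j$ and the column-blocks are disjoint, $z(\tilde s_1(l)) = j$, which is (\ref{e:tildes:b}). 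The verification of (\ref{e:tildez:b}) from (\ref{e:zssz}) is entirely symmetric, swapping $s \leftrightarrow z$, $\tilde s_1 \leftrightarrow \tilde z_1$, columns $\leftrightarrow$ rows, and using the fact that within a single row the $\nz$-column indices are distinct.

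The main obstacle I anticipate is purely bookkeeping: keeping straight the four roles played by an index $k$ — it belongs to a unique column-block (via $i_s$) and to a unique row-block (via $i_z$), and simultaneously $s(k)$ names a row while $z(k)$ names a column — and not conflating "$\tilde s_1(l)$ is an index of row $s(l)$" (which is (\ref{e:tildes:a})) with "$l$ is an index of column $z(\tilde s_1(l))$" (which is (\ref{e:zssz}) plus (\ref{e:tildez:a})). Once the preliminary bijection observation is stated cleanly and one commits to the convention that uniqueness of row indices within a column and of column indices within a row is what converts "same block" into "equal," both directions are short. No estimates or case analyses beyond these are needed.
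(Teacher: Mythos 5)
Your proof is correct and takes essentially the same route as the paper's: both directions rest on the observation that an index of $s$ lies in a unique column block, together with the distinctness of the row values $s(\cdot)$ within one column block (and the symmetric facts for $z$), applied in exactly the paper's order. The one uncorrected slip — ``$z(\tilde s_1(k)) = i$'' should read $z(\tilde s_1(k)) = j$ by \ref{e:tildes:b} — is harmless, since the fact you actually use, $s(\tilde z_1(\tilde s_1(k))) = i$, follows directly from \ref{e:tildez:b} applied to the index $\tilde s_1(k)$ of the row $i$.
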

\begin{proof}
We show \ref{e:tildes:b} for the array $\tilde{s}_1$. The proof of \ref{e:tildez:b} for $\tilde{z}_1$ is similar. We first remark that if $l \in \intcc{1;\nu}$ is an index of both the columns $j$ and $j_0$ then $j = j_0$. This follows immediately from Definition \ref{d:index} and the definition of $i_s$. \\
Let $l$ be an index of the column $j$, hence $\tilde{z}_1(\tilde{s}_1(l))$ is an index of the column $j$. By \ref{e:tildez:a}, $\tilde{z}_1(\tilde{s}_1(l))$ is an index of the column $z(\tilde{s}_1(l))$. On account of the above remark, we have $z(\tilde s_1 (l)) = j$. \\
Conversely, let $k \in \intcc{1;\nu}$ be an index of the column $j$. By \ref{e:tildes:a} and \ref{e:tildez:b} we have
\begin{equation}
\label{e:zssz:1}
s(\tilde{z}_1(\tilde{s}_1(k))) = s(k). 
\end{equation}
Since both $\tilde{z}_1(\tilde{s}_1(k))$ and $k$ are indices of the column $j$ by \ref{e:tildes:b} and \ref{e:tildez:a}, it follows from \ref{e:zssz:1} and the definition of $s$ that $\tilde{z}_1(\tilde{s}_1(k))=k$. The proof of \ref{e:zssz:b} is similar.
\end{proof}
\subsubsection{Data structures for sets}
Realizing the method given in \ref{fig:alg:1} requires the computation of the set $T$ in lines \ref{alg1:updateT:0} and \ref{alg1:updateT:1}. Computing $T$ will require accessing the sets $V \cap \operatorname{NZR}(v)$ for all $v \in \intcc{1;n+r}$. 
For $L=1$, we additionally need to access the set $$ T_0 \defas \{v \in V \ | \  V \cap \operatorname{NZR}(v) = \emptyset \}$$ since the test $V \subseteq \operatorname{NZC}(V)$ in line \ref{alg1:T0empty} is equivalent to the test $T_0 = \emptyset$. 

Therefore, we introduce below appropriate data structures to store the sets $V\cap \operatorname{NZR}(\cdot)$, $T$, $T_0$ and $V$. 

To represent the set $V \cap \operatorname{NZR}(v)$ for any $v \in \intcc{1;n+r}$, we first note that $$\operatorname{NZR}(v) = \{s(k) \ | \ k \in \intco{i_s(v);i_s(v+1)} \ \}.$$ Therefore, we take advantage of the non-uniqueness of $s$ as a part of the CCS-format of $\sX$. In particular, we introduce an integer array $c$ of length $n+r$, and add the following property to the definition of $s$:
\begin{equation}
\label{e:c}
V \cap \operatorname{NZR}(v) = \{ s(k) \ | \ k \in \intco{i_s(v) ; i_s(v) + c(v)}  \ \}.
\end{equation}
In other words, $c(v)$ equals the value $|V \cap \operatorname{NZR}(v)|$ and indicates the last position in $s$ of an element of $V \cap \operatorname{NZR}(v)$. We remark that if $V = \intcc{1;n}$ then $c(v)$ equals the number of $\nz$-entries in the $v$-th column of $\sX$. 

The pair $(s,c)$ stores all information about the sets $V\cap \operatorname{NZR}(\cdot)$. Moreover, the procedure to remove an element $w$ from the set $V \cap \operatorname{NZR}(v)$ can be easily performed: The entry $w$ and the entry in position $i_s(v)+c(v)-1$ are swapped in $s$, and $c(v)$ is decremented by $1$.

The data structures for $T$ and $T_0$ are such that adding, removing and picking of elements can be achieved in constant time. These requirements can be realized using two integer arrays (two for each set) with appropriate functionality. The set $V$ is implemented as a boolean array of length $n+r$ where a '$1$' in the $k$-entry indicates that $k \in V$. 
\subsection{Algorithm}

\label{ss:operations}
An algorithm for the method in \ref{fig:alg:1} is given in \ref{fig:implementation}. Before we prove Theorem \ref{th:main}, we focus on the two operations in \ref{fig:implementation} that determine the complexity of the algorithm: The initialization of the data structures and the execution of line \ref{fig:implementation:remove} which is part of the computation of the sets $T$ and $T_0$. 
\subsubsection{Initialization of the data structures}
\label{sss:init}
The arrays $z$ and $i_z$ can be obtained in time linear in $n+r+\nu$ by transposing $\sX$ when $\sX$ is available in the CCS-format \cite[Section 2]{Gustavson78}. Roughly speaking, the main part of the transposing algorithm in \cite{Saad94} consists of a loop over all indices $k \in \intcc{1;\nu}$. In the body of the loop, $z$ is computed as follows. If $k$ is an index of the column $j$ one sets $z(l)\defas j$ for a suitable index $l$ of the row $s(k)$. Additionally, one may initialize $\tilde s$ by setting $\tilde s(k) \defas l$ in the same loop. Thus, $\tilde s$ clearly satisfies \ref{e:defoftildes}. Similarly, $\tilde{z}$ is then obtained by transposing $\sX^T$. Hence, the arrays $z$, $i_z$, $\tilde z$, $\tilde s$ are initialized with complexity $\mathcal{O}(n+r+\nu)$. 

\begin{figure}[t]
\ifthenelse{\boolean{Forreview}}{\normalsize}{\small}
\algorithmicindent.6em%
\begin{algorithmic}[1]
\item[\textbf{Input}] $s,\tilde s,i_s,z,\tilde z,i_z,c,w,j$
\REQUIRE $\sX_{w,j} = \nz$ and $l \in \intco{i_z(w);i_z(w+1)}$ such that $z(l) = j$.
\STATE $\tilde j \defas \tilde z(l)$ \hspace{3.2cm} (Note that $s(\tilde j) = w$)
\STATE $k \defas \tilde s( i_s(j) + c(j)-1)$
\IF {$c(j)> 1$}  
\STATE 
\label{fig:alg:removefrom:T:3}
Swap entries at positions $\tilde j$ and $i_s(j) + c(j)-1$ in each of the arrays $s$ and $\tilde s$. 
\STATE 
\label{fig:correct:almostdone}
$\tilde z(l) \defas i_s(j) + c(j)-1$
\STATE
$\tilde z(k) \defas \tilde j$
\label{fig:correct:done}
\ENDIF 
\IF {$c(j)>0$}
\STATE
Decrement $c(j)$.
\ENDIF
\item[\textbf{Output}] $s,\tilde s,\tilde z,c$
\end{algorithmic}
\caption{\label{fig:alg:removefrom:T} Procedure to remove $w$ from the representation of $V \cap \operatorname{NZR}(j)$}
 \vspace*{-\baselineskip}
\end{figure}

\begin{figure}[t]
\ifthenelse{\boolean{Forreview}}{\normalsize}{\small}
\algorithmicindent.6em%
\begin{algorithmic}[1]
\item[\textbf{Input}] $s,i_s$
\REQUIRE $L\in \{0,1\}$
\STATE Transpose $\sX$, initialize $c$ and set $V\defas \intcc{1;n}$.
\label{fig:implementation:line:1}
\IF {$L=0$}
\STATE
$T \defas \{ v \in \intcc{1;n+r} \ | \ c(v) = 1\} $
\ELSE
\STATE
$T \defas \{ v \in \intcc{n+1;n+r} \ | \ c(v) = 1\} $
\STATE
$T_0 \defas \{ v \in \intcc{1;n} \ | \ c(v) = 0\} $
\ENDIF
\label{fig:implementation:line:7}
\WHILE {$V \neq \emptyset$}
\label{fig:implementation:line:2}
\IF {$L=0$ \OR $T_0 = \emptyset $}
\label{fig:implementation:line:3}
\IF {$T = \emptyset$}
\STATE \textbf{break}
\ENDIF
\STATE Pick $v \in T$.
\label{fig:implementation:pick}
\STATE $\{w\} \defas V \cap \operatorname{NZR}(v)$ \quad (Note that $w$ is unique.)
\ELSE 
\STATE Pick $w \in T_0$.
\ENDIF
\label{fig:implementation:line:11}
\FORALL{$k \in \intco{i_z(w);i_z(w+1)}$}
\label{fig:implementation:forall}
\STATE $j \defas z(k)$
\STATE Execute the algorithm in \ref{fig:alg:removefrom:T} for $w$ and $j$.
\label{fig:implementation:remove}
\IF {$c(j) = 0$}
\STATE 
$T \defas T \setminus \{j\}$
\label{fig:implementation:removefromT}
\IF {$L=1$ \AND $j \in V$}
\label{fig:implementation:line:17}
\STATE 
$T_0 \defas T_0 \cup \{j \}$
\ENDIF
\ELSIF {$c(j) = 1$}
\IF {$L=0$ \OR $j \notin V$}
\label{fig:implementation:line:21}
\STATE 
$T \defas T \cup \{j \}$
\label{fig:implementation:inserttoT}
\ENDIF
\ENDIF 
\ENDFOR
\IF{$L=1$}
\label{fig:implementation:line:26}
\IF {$c(w) = 1$}
\STATE
$T \defas T \cup \{w \}$
\ELSIF {$c(w) = 0$}
\STATE
$T_0 \defas T_0 \setminus \{ w \}$
\ENDIF
\label{fig:implementation:line:31}
\ENDIF
\label{fig:implementation:line:32}
\STATE 
$V \defas V \setminus \{ w\}$.
\label{fig:implementation:line:newV}
\ENDWHILE
\item[\textbf{Output}] $V$
\end{algorithmic}
\caption{\label{fig:implementation}Algorithm for the method given in \ref{fig:alg:1}}
\vspace*{-\baselineskip}
\end{figure}

It is not hard to see that the initialization of the array $c$ and the data structures for $V$, $T$ and $T_0$ has time complexity $\mathcal{O}(n+r)$. 
\myvspace
\subsubsection{Computation of $T$ and $T_0$}
The computation of $T$ and $T_0$ in lines $3$--$7$ in \ref{fig:alg:1} is implemented by iteratively updating the representation of $T$ and $T_0$. The update consists of two operations:
\begin{compactitem}[$\cdot$]
\item  Removing a row index $w$ (due to line \ref{alg1:removefromV} in \ref{fig:alg:1}) from the representation of the sets $V \cap \operatorname{NZR}(j)$ for all $j$, and 
\item inserting or removing $j$ from $T$ ($T_0$, respectively) depending on the new value of $c(j) = |V \cap \operatorname{NZR}(j)|$. 
\end{compactitem}
The algorithm for the first operation is given in \ref{fig:alg:removefrom:T}: The array $s$ is updated in line \ref{fig:alg:removefrom:T:3} to satisfy \ref{e:c}. Consequently, an update of $\tilde s$ and $\tilde z$ is necessary (lines \ref{fig:alg:removefrom:T:3}, \ref{fig:correct:almostdone} and \ref{fig:correct:done}). The correctness of the updates in \ref{fig:alg:removefrom:T} is formalized in Lemma \ref{l:core} below. Prior to that, we illustrate the iterative computation of $T$ and the use of $z$, $i_z$, $\tilde{s}$ and $\tilde{z}$ by an example. 

\begin{figure*}[t]
\centering
\psfrag{null}[r][t]{\small $0.0$}
\psfrag{0.}[rb][]{\small$0.$}
\psfrag{0.5}[r][]{\small$0.5$}
\psfrag{1.}[r][]{\small$1.0$}
\psfrag{1.5}[r][]{\small$1.5$}
\psfrag{500}[t][r]{\small$500$}
\psfrag{750}[t][r]{}
\psfrag{1000}[t][]{\small$1000$}
\psfrag{1250}[t][r]{}
\psfrag{1500}[t][]{\small$1500$}
\psfrag{1750}[t][r]{}
\psfrag{2000}[t][]{\small$2000$}
\psfrag{2250}[t][r]{}
\psfrag{2500}[t][]{\small$2500$}
\psfrag{title}[][]{\small$n=1000$, $r=250$}
\psfrag{title2}[][]{\small$r=500$, $\nu = 50000$}
\psfrag{y}[][]{\small cpu in ms}
\psfrag{y2}[][]{\small cpu in ms}
\psfrag{x}[l][]{\small $\nu$}
\psfrag{x2}[l][]{\small $n$}
\psfrag{a1}[t][]{\small$1\!\cdot\!10^4$}
\psfrag{a2}[t][]{}
\psfrag{a3}[t][]{\small$3\!\cdot\!10^4$}
\psfrag{a4}[t][]{}
\psfrag{a5}[t][]{\small$5\!\cdot\!10^4$}
\psfrag{a6}[t][]{\small$6\!\cdot\!10^4$}
\psfrag{a6}[t][]{}
\psfrag{a7}[t][]{\small$7\!\cdot\!10^4$}
\includegraphics[width=0.99\textwidth]{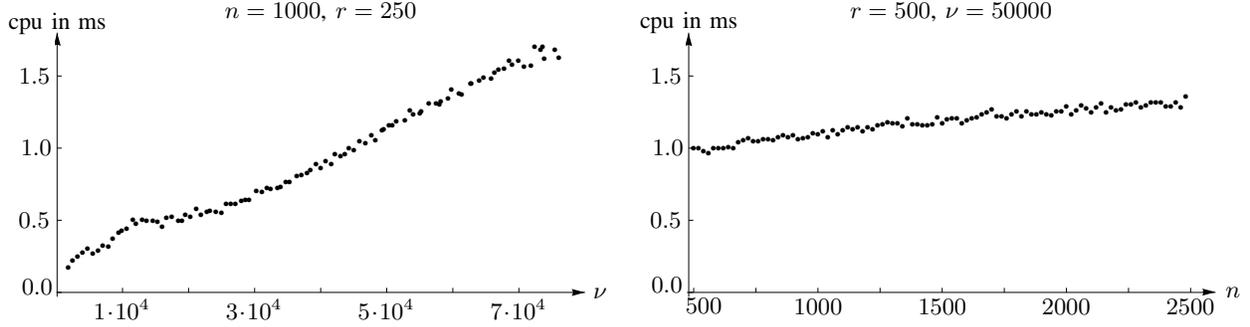} 
\caption{\label{fig:linearity} Run time to verify strong structural controllability for $\lambda = 0$ in dependence of $\nu$ and $n$ for randomly chosen pairs of structural matrices $(\sA,\sB) \in \{0,\nz\}^{n \times (n+r)}$ such that $(\sA,\sB)$ is strong structurally controllable for $\lambda = 0$. $\nu$ denotes the number of $\nz$-entries in $(\sA,\sB)$. The run time to test strong structural controllability for $\lambda \neq 0$ is within a factor $1.1$ of the run time for $\lambda=0$. The underlying implementation of the algorithm in \ref{fig:implementation} was executed on a Intel Core CPU i7-3770S (3.10~GHz).}
\end{figure*}

\begin{example}
\label{ex:tildes}
Let us perform the first steps in the overall algorithm in \ref{fig:implementation} for the pair $(\sA,\sB)$ given by \ref{e:ex:ccs} and for $L=0$.
\\At line \ref{fig:implementation:line:2}, the arrays $s,z,i_s,i_z$ are given as in \ref{t:1}. The array $c$ and the sets $V$ and $T$ are given as follows:
\begin{align*}
c & = (2,1,0,2,0,1,2,1), \\ 
V & = \{1,2,3,4,5,6\}, \\
T & = \{2,6,8\}.
\end{align*}
Since $T \neq \emptyset$ we may pick $2$ from $T$ in line \ref{fig:implementation:pick} to obtain $w = s(i_s(2))=2$. Therefore, for $k \in \intco{i_z(2),i_z(3)}=\{2,3\}$ we have to remove $w=2$ from the sets $V \cap \operatorname{NZR}(z(k))$ as required in line \ref{fig:implementation:remove}. Note that $z(k) = 2,7$ for $k=2,3$, i.e., the $\nz$-entries in row $2$ are precisely in columns $2$ and $7$.

Updating $V \cap \operatorname{NZR}(2)$ and $V \cap \operatorname{NZR}(7)$, respectively, in \emph{constant} time requires the array $\tilde z$ for the following reason. We need to access in constant time those positions $\tilde{j}$ for which $s(\tilde{j}) = w=2$. These indices are required since we need to swap the entries at positions $\tilde{j}$ in $s$ to possibly different positions due to the required property \ref{e:c} of $s$. In order to avoid a search operation, $\tilde z$ is introduced. $\tilde{z}$ provides the required positions $\tilde{j}=3$ and $\tilde{j}=7$ as follows: $\tilde{j}=\tilde z(k) = 3,7$ for $k=2,3$, so $s(3) = s(7) = 2$. 

Since the entry for column $2$ in the array $c$ equals $1$, i.e. $c(2) = 1$, no swap is necessary for updating $V \cap \operatorname{NZR}(2)$. The update is finished by decrementing $c(2)$ to $0$. Therefore, in line \ref{fig:implementation:removefromT}, we remove $2$ from $T$ to temporally obtain $T=\{6,8\}$. In the representation of $V \cap \operatorname{NZR}(7)$, a swap is necessary as $c(7) = 2$. So for $\tilde{j}=7$ (note that we identified $\tilde{j}$ previously), positions $\tilde{j}$ and $i_s(7)+c(7)-1=8$ in $s$ are swapped, and $c(7)$ is decremented to equal $1$. Hence, $7$ is inserted to $T$ in line \ref{fig:implementation:inserttoT}. Thus, we have $T=\{6,7,8\}$ and $V = \{1,3,4,5,6\}$ at line \ref{fig:implementation:line:newV}.

As $s$ has changed, $\tilde z$ and $\tilde{s}$ need to be updated accordingly by means of swapping entries. The access to the required positions is indicated in \ref{t:1} and is similar to the access operations as detailed in this example. The updated arrays are as follows ($z$ remains unchanged, the underlined entries below are those that have changed):

\ifthenelse{\boolean{Forreview}}{\normalsize}{\small}
\begin{align*}
c & = (2,\underline{0},0,2,0,1,\underline{1},1), \\ 
s &= (3,5,2,1,6,4,\underline{3},\underline{2},6), \\
\tilde{s} &= (4,7,2,1,8,6,\underline{5},\underline{3},9), \\
\tilde{z} &= (4,3,\underline{8},1,\underline{7},6,2,5,9). 
\end{align*}
\normalsize
\end{example}

\begin{lemma}
\label{l:core}
Let $\sX_{w,j} = \nz$ and let $l$ be an index of the row $w$ such that $z(l) = j$. Suppose that all inputs in \ref{fig:alg:removefrom:T} satisfy their defining properties for $V \subseteq \intcc{1;n}$. After the termination of the algorithm, $s$ and $c$ satisfy \ref{e:c} for $V\setminus \{w\}$ in place of $V$. Moreover, the arrays $\tilde s$ and $\tilde z$ satisfy \ref{e:defoftildes} and \ref{e:defoftildez}, respectively. 
\end{lemma}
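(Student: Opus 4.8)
The plan is to verify the three assertions of the lemma separately, tracking the effect of the algorithm in \ref{fig:alg:removefrom:T} on the four arrays $s$, $c$, $\tilde s$, $\tilde z$. Throughout, write $p \defas i_s(j) + c(j) - 1$ for the last position in $s$ occupied by an element of $V \cap \operatorname{NZR}(j)$ before the call (this is the quantity appearing in lines \ref{fig:alg:removefrom:T:3}--\ref{fig:correct:done}), and recall from line~1 of \ref{fig:alg:removefrom:T} that $\tilde j = \tilde z(l)$ satisfies $s(\tilde j) = w$ by \ref{e:tildez:b}, since $l$ is an index of row $w$; moreover, since $z(l) = j$, property \ref{e:tildez:a} tells us $\tilde j$ is an index of column $j$, and because $w \in V$ the defining property \ref{e:c} of the pair $(s,c)$ forces $\tilde j \in \intco{i_s(j); i_s(j)+c(j)}$, i.e. $i_s(j) \le \tilde j \le p$. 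This observation — that $w$ sits among the \emph{first} $c(j)$ row indices of column $j$ — is what makes the swap-with-position-$p$ bookkeeping legitimate, and I would state it explicitly at the outset.

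For the first assertion (that $s$, $c$ satisfy \ref{e:c} for $V \setminus \{w\}$), I distinguish the columns $j' \ne j$ from the column $j$ itself. For $j' \ne j$: the algorithm touches $s$ only at positions $\tilde j$ and $p$, both of which lie in $\intco{i_s(j); i_s(j+1)}$, i.e. outside the index range of any other column; and $c(j')$ is untouched; so \ref{e:c} persists for $j'$ verbatim — note here that $V \cap \operatorname{NZR}(j') = (V\setminus\{w\}) \cap \operatorname{NZR}(j')$ unless $w \in \operatorname{NZR}(j')$, but the forall-loop in \ref{fig:implementation} calls \ref{fig:alg:removefrom:T} for \emph{every} column $j$ with $\sX_{w,j} = \nz$, so each such column is handled by its own call, and within a single call the other columns genuinely do not change. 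For the column $j$: if $c(j) > 1$, the swap moves $w$ from position $\tilde j$ to position $p$, leaving the multiset $\{s(k) : i_s(j) \le k \le p\}$ unchanged and in particular keeping the first $c(j)-1$ entries equal to $(V\setminus\{w\}) \cap \operatorname{NZR}(j)$ as a set; then $c(j)$ is decremented, so the new range $\intco{i_s(j); i_s(j)+c(j)}$ exactly lists $(V\setminus\{w\})\cap\operatorname{NZR}(j)$. If $c(j) = 1$ then $\tilde j = p = i_s(j)$, no swap happens, and decrementing $c(j)$ to $0$ makes the range empty, which is correct since $V\cap\operatorname{NZR}(j) = \{w\}$. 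The case $c(j) = 0$ cannot occur under the hypotheses (it would contradict $w \in V\cap\operatorname{NZR}(j)$), but the guard $c(j)>0$ makes the code harmless anyway.

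For the second and third assertions I must check that the two defining properties of each of $\tilde s$, $\tilde z$ — namely \ref{e:tildes:a}--\ref{e:tildes:b} and \ref{e:tildez:a}--\ref{e:tildez:b} — survive. The ``index'' properties \ref{e:tildes:a} and \ref{e:tildez:a} are the combinatorially delicate part and I expect them to be the main obstacle, because a swap in $s$ at positions $\tilde j$ and $p$ changes which row indices sit where, so the pointers $\tilde s$ must be permuted in lockstep (line \ref{fig:alg:removefrom:T:3} does exactly this for $\tilde s$), and the pointers $\tilde z$ pointing \emph{into} positions $\tilde j$ and $p$ must be redirected (lines \ref{fig:correct:almostdone}--\ref{fig:correct:done}). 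Concretely: after swapping $s(\tilde j) \leftrightarrow s(p)$ and simultaneously $\tilde s(\tilde j) \leftrightarrow \tilde s(p)$, property \ref{e:tildes:a} for $\tilde s$ is preserved because the pair $(s(k), \tilde s(k))$ is carried around intact at each of the two positions. For \ref{e:tildez:a} and \ref{e:tildez:b} for $\tilde z$, the only row-$w$ indices whose target positions in $s$ moved are $l$ (formerly $\tilde z(l) = \tilde j$, now $s(p) = w$, so set $\tilde z(l) \defas p$, line \ref{fig:correct:almostdone}) and the index $k$ defined in line~2 as $k = \tilde s(p)$ — which, by \ref{e:tildes:b} and the fact that $p$ is (before the decrement) an index of column $j$, is an index of the row $s(p)$, and since after the swap position $\tilde j$ holds the old $s(p)$, we must set $\tilde z(k) \defas \tilde j$ (line \ref{fig:correct:done}). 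One then checks \ref{e:tildes:b} and \ref{e:tildez:b} are restored by invoking Lemma \ref{l:zssz}: it suffices to verify that the updated $\tilde s_1 \defas \tilde s$ and $\tilde z_1 \defas \tilde z$ are mutually inverse as maps on $\intcc{1;\nu}$, which reduces to the single computation that after the updates $\tilde z(\tilde s(\tilde j)) = \tilde j$, $\tilde z(\tilde s(p)) = p$, $\tilde s(\tilde z(l)) = l$, $\tilde s(\tilde z(k)) = k$ — each a one-line substitution using the lines of \ref{fig:alg:removefrom:T} — together with the observation that all other components of $\tilde s$ and $\tilde z$, and hence the corresponding inverse relations, are unchanged. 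Since decrementing $c(j)$ does not affect any array entry, the properties \ref{e:defoftildes} and \ref{e:defoftildez} hold with respect to the final arrays, completing the proof.
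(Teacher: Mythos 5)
Your proof is correct and follows essentially the same route as the paper's: the paper likewise treats the updates of $s$, $c$ and $\tilde s$ as immediate and reduces the correctness of the $\tilde z$ update to verifying, via Lemma \ref{l:zssz}, the mutual-inverse identities at exactly the changed positions $\tilde j$, $i_s(j)+c(j)-1$, $l$ and $k$ — your four identities are precisely the four substitutions the paper writes out. (Two cosmetic slips that do not affect the argument: the fact that $k=\tilde s(i_s(j)+c(j)-1)$ is an index of the row stored at that position follows from \ref{e:tildes:a} rather than \ref{e:tildes:b}, and that row need not be $w$, so $k$ is not a ``row-$w$ index.'')
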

\begin{proof}
The only assertion that requires a proof is the correct update of $\tilde z$ after having changed $s$ in line \ref{fig:alg:removefrom:T:3}. Denote by $\tilde s_0$ and $\tilde z_0$ the arrays $\tilde s$ and $\tilde z$ prior to the execution of the algorithm. Inductively, we may assume that $\tilde s_0$ and $\tilde z_0$ satisfy \ref{e:defoftildes} and \ref{e:defoftildez} in place of $\tilde{s}$ and $\tilde{z}$, respectively. By Lemma \ref{l:zssz} we need to verify \ref{e:zssz} for those positions $k$ of $\tilde z$ and $\tilde s$ whose entries have changed after line \ref{fig:correct:done}. Indeed, \ref{e:zssz} is valid as

\small
\begin{align*}
\tilde s ( \tilde z (l ) ) & = 
\tilde s_0 ( \tilde j) = \tilde s_0(\tilde z_0(l)) = l, \\
\tilde s ( \tilde z(k)) & = \tilde s ( \tilde j) = \tilde s_0 ( i_s(j) + c(j)-1) =k,\\
\tilde z ( \tilde s( \tilde j)) & = \tilde z(\tilde s_0(i_s(j) + c(j)-1)) = \tilde z(k) = \tilde j, \\
\tilde z (  \tilde s(i_s(j) + c(j)-1))& = \tilde z(\tilde s_0(\tilde j)) = \tilde z(l) = i_s(j) + c(j)-1,
\end{align*}
\normalsize
hence the proof is finished.
\end{proof}

Due to the arrays $\tilde s$ and $\tilde z$, the representation of the sets $V\cap \operatorname{NZR}(\cdot)$ can be updated in constant time, which is the key ingredient to Theorem \ref{th:main} as we will see in the following subsection. 

\subsubsection{Correctness and complexity}
Lines \ref{alg1:T0empty}--\ref{alg1:line:16} of \ref{fig:alg:1} clearly correspond to lines \ref{fig:implementation:line:3}--\ref{fig:implementation:line:11} in the algorithm in \ref{fig:implementation}. The computation of $T$ in lines \ref{alg1:updateT:0} and \ref{alg1:updateT:1} in \ref{fig:alg:1} is realized in lines \ref{fig:implementation:forall}--\ref{fig:implementation:line:32} in \ref{fig:implementation} as discussed above. We remark that lines \ref{fig:implementation:line:26}--\ref{fig:implementation:line:32} in \ref{fig:implementation} are required as the set $V$ contains $w$ during the execution of lines \ref{fig:implementation:line:17} and \ref{fig:implementation:line:21} in contrast to line \ref{alg1:updateT:1} in \ref{fig:alg:1}. 
It follows that the outputs of the algorithms in \ref{fig:alg:1} and \ref{fig:implementation} coincide.

The complexity of the algorithm in \ref{fig:implementation} proves Theorem \ref{th:main}. Indeed, lines \ref{fig:implementation:line:1}--\ref{fig:implementation:line:7} are executed in time linear in $n+r+\nu$ (see Subsection \ref{sss:init}). The while loop in line \ref{fig:implementation:line:2} terminates after at most $n$ iterations (see \ref{fig:alg:1}). Together with the for loop in line \ref{fig:implementation:forall}, it yields an overall complexity of $\mathcal{O}(n+\nu)$ for the while loop. Finally, the output of $V$ requires time linear in $n$, so that Theorem \ref{th:main} is proved.

\section{Computational results}
\label{s:results}

In this section, we analyze the performance of the implementation of the algorithm in \ref{fig:implementation} in two aspects: Linearity and the application of the algorithm to the minimization problem as discussed in Section \ref{s:intro}. The programming language in which the algorithm is implemented is C including Fortran routines of \cite{Saad94}. The computations were executed on a Intel Core CPU i7-3770S (3.10~GHz).
\subsection{Linearity}
The linearity in $\nu$ and $n$ of the algorithm in \ref{fig:implementation} for $L=0$ is illustrated in \ref{fig:linearity}. The matrices for which the computational time was recorded are chosen at random such that each is strong structurally controllable for $\lambda = 0$. The run time for $L=1$ and matrices that are strong structurally controllable for every $\lambda \neq 0$ is within a factor $1.1$ of the run time for $L=0$ but it is not illustrated in \ref{fig:linearity}.
\subsection{Minimization}
The matrix $\sB$ given in Example \ref{ex:ccs} is indeed one with the minimum number of columns such that the pair $(\sA,\sB)$ as defined in \ref{e:ex:ccs} is strong structurally controllable. This is verified in $0.56$ milliseconds of cpu time by testing all possible candidates $\sB$. We emphasize that the number of columns required in this case is strictly less than $3$. In contrast, the minimum number of columns obtained by the minimization algorithm presented in \cite{PequitoPopliKarIlicAguiar13} is $3$ due to restricting $\sB$ to have precisely one $\nz$-entry per column. As detailed in the introduction, real applications benefit from a reduced number of columns required for $\sB$ such that $(\sA,\sB)$ is strong structurally controllable.

The investigation of structural properties of electrical networks is quite popular, e.g. \cite{i96iscas,i99ijcta,i03diagnosis,PequitoPopliKarIlicAguiar13}. In \cite[Section IV, p. 418]{PequitoPopliKarIlicAguiar13}, a 5-bus power system is given in terms of a structural matrix $\sA \in \{0,\nz\}^{16\times 16}$. An application of our algorithm in \ref{fig:implementation} to find a structural matrix $\sB$ with a minimum number of columns such that $(\sA,\sB)$ is strong structurally controllable results in a structural matrix $\sB$ having $3$ columns. It takes $437$ seconds for this task using a parallel computation on $6$ threads.

\bibliographystyle{IEEEtran}
\bibliography{IEEEtranBSTCTL,preambles,mrabbrev,strings,fremde,eigeneJOURNALS,eigeneCONF,fremde2}

\def\ocirc#1{\ifmmode\setbox0=\hbox{$#1$}\dimen0=\ht0 \advance\dimen0
  by1pt\rlap{\hbox to\wd0{\hss\raise\dimen0
  \hbox{\hskip.2em$\scriptscriptstyle\circ$}\hss}}#1\else {\accent"17 #1}\fi}
  \def\cprime{$'$} \ifx\hyperbaseurl\undefined
  \def\href#1#2{#2}\def\url#1{\texttt{#1}} \fi
  \ifx\ExplicitURLsInBibTeX\undefined\relax\else
  \def\href#1#2{www.reiszig.de/gunther/#1}\def\url#1{\texttt{#1}} \fi
  \ifx\hyperbaseurl\undefined\def\href#1#2{#2}\def\url#1{\texttt{#1}} \fi
  \ifx\hyperbaseurl\undefined\def\href#1#2{#2}\def\url#1{\texttt{#1}} \fi
  \ifx\hyperbaseurl\undefined\def\href#1#2{#2}\def\url#1{\texttt{#1}} \fi
  \ifx\hyperbaseurl\undefined\def\href#1#2{#2}\def\url#1{\texttt{#1}} \fi
\begin{thebibliography}{10}
\providecommand{\url}[1]{#1}
\csname url@samestyle\endcsname
\providecommand{\newblock}{\relax}
\providecommand{\bibinfo}[2]{#2}
\providecommand{\BIBentrySTDinterwordspacing}{\spaceskip=0pt\relax}
\providecommand{\BIBentryALTinterwordstretchfactor}{4}
\providecommand{\BIBentryALTinterwordspacing}{\spaceskip=\fontdimen2\font plus
\BIBentryALTinterwordstretchfactor\fontdimen3\font minus
  \fontdimen4\font\relax}
\providecommand{\BIBforeignlanguage}[2]{{%
\expandafter\ifx\csname l@#1\endcsname\relax
\typeout{** WARNING: IEEEtran.bst: No hyphenation pattern has been}%
\typeout{** loaded for the language `#1'. Using the pattern for}%
\typeout{** the default language instead.}%
\else
\language=\csname l@#1\endcsname
\fi
#2}}
\providecommand{\BIBdecl}{\relax}
\BIBdecl

\bibitem{Sontag98}
E.~D. Sontag, \emph{Mathematical control theory}, 2nd~ed., ser. Texts in
  Applied Mathematics.\hskip 1em plus 0.5em minus 0.4em\relax New York:
  Springer-Verlag, 1998, vol.~6, deterministic finite-dimensional systems.

\bibitem{MayedaYamada79}
H.~Mayeda and T.~Yamada, ``Strong structural controllability,'' \emph{SIAM J.
  Control Optim.}, vol.~17, no.~1, pp. 123--138, 1979.

\bibitem{ReinschkeSvaricekWend92}
K.~J. Reinschke, F.~Svaricek, and H.-D. Wend, ``On strong structural
  controllability of linear systems,'' in \emph{Proc. 31st IEEE Conf. Decision
  and Control (CDC), Tucson, Arizona, USA}.\hskip 1em plus 0.5em minus
  0.4em\relax IEEE, Dec. 1992, pp. 203--208.

\bibitem{JarczykSvaricekAlt11}
J.~C. Jarczyk, F.~Svaricek, and B.~Alt, ``Strong structural controllability of
  linear systems revisited,'' in \emph{Proc. 50th IEEE Conf. Decision and
  Control (CDC) and European Control Conference (\nobreak{ECC}), Orlando, FL,
  U.S.A., 12-15 Dec. 2011}.\hskip 1em plus 0.5em minus 0.4em\relax New York:
  IEEE, 2011, pp. 1213--1218.

\bibitem{ChapmanMesbahi13}
A.~Chapman and M.~Mesbahi, ``On strong structural controllability of networked
  systems: A constrained matching approach,'' in \emph{American Control
  Conference (ACC), 2013}, June 2013, pp. 6126--6131.

\bibitem{i12str}
\BIBentryALTinterwordspacing
C.~Hartung, G.~Rei{\ss}ig, and F.~Svaricek, ``Characterization of strong
  structural controllability of uncertain linear time-varying discrete-time
  systems,'' in \emph{Proc. 51st IEEE Conf. Decision and Control (CDC), Maui,
  Hawaii, U.S.A., 10-13 Dec. 2012}.\hskip 1em plus 0.5em minus 0.4em\relax New
  York: IEEE, 2012, pp. 2189--2194. [Online]. Available:
  \url{http://dx.doi.org/10.1109/CDC.2012.6426326}
\BIBentrySTDinterwordspacing

\bibitem{i13str}
C.~Hartung, G.~Rei{\ss}ig, and F.~Svaricek, ``Sufficient conditions for strong
  structural controllability of uncertain linear time-varying systems,'' in
  \emph{Proc. 2013 American Control Conference (ACC), Washington, DC, U.S.A.,
  17-19 June 2013}, 2013, pp. 5895--5900.

\bibitem{i13strb}
C.~Hartung, G.~Rei{\ss}ig, and F.~Svaricek, ``Necessary Conditions for 
Structural and Strong Structural Controllability of Linear Time-Varying Systems,'' in
  \emph{Proc. European Control Conference (ECC), Z{\"u}rich, Switzerland, 17-19
  Jul. 2013}, 2013, pp. 1335--1340.

\bibitem{i14str}
\BIBentryALTinterwordspacing
G.~Reissig, C.~Hartung, and F.~Svaricek, ``Strong structural controllability
  and observability of linear time-varying systems,'' 2014,
  \href{http://arxiv.org/abs/1306.1486}{arXiv:1306.1486}. [Online]. Available:
  \url{http://dx.doi.org/10.1109/TAC.2014.2320297}
\BIBentrySTDinterwordspacing

\bibitem{LiuSlotineBarabasi11}
Y.-Y. Liu, J.-J. Slotine, and A.-L. Barabasi, ``Controllability of complex
  networks,'' \emph{Nature}, vol. 473, no. 7346, pp. 167--173, May 2011.

\bibitem{PequitoPopliKarIlicAguiar13}
S.~Pequito, N.~Popli, S.~Kar, M.~Ilic, and A.~Aguiar, ``A framework for
  actuator placement in large scale power systems: Minimal strong structural
  controllability,'' in \emph{Computational Advances in Multi-Sensor Adaptive
  Processing (CAMSAP), 2013 IEEE 5th International Workshop on}, Dec 2013, pp.
  416--419.

\bibitem{LiuSlotineBarabasi12}
\BIBentryALTinterwordspacing
Y.-Y. Liu, J.-J. Slotine, and A.-L. Barabási, ``Control centrality and
  hierarchical structure in complex networks,'' \emph{PLoS ONE}, vol.~7, no.~9,
  p. e44459, 09 2012. [Online]. Available:
  \url{http://dx.doi.org/10.1371%2Fjournal.pone.0044459}
\BIBentrySTDinterwordspacing

\bibitem{CommaultDion13}
\BIBentryALTinterwordspacing
C.~Commault and J.-M. Dion, ``Input addition and leader selection for the
  controllability of graph-based systems,'' \emph{Automatica}, vol.~49, no.~11,
  pp. 3322 -- 3328, 2013. [Online]. Available:
  \url{http://www.sciencedirect.com/science/article/pii/S0005109813003695}
\BIBentrySTDinterwordspacing

\bibitem{BarrettEtAl94}
\BIBentryALTinterwordspacing
R.~Barrett, M.~Berry, T.~F. Chan, and et~al., \emph{Templates for the solution
  of linear systems: building blocks for iterative methods}.\hskip 1em plus
  0.5em minus 0.4em\relax Philadelphia, PA: Society for Industrial and Applied
  Mathematics (SIAM), 1994. [Online]. Available:
  \url{http://dx.doi.org/10.1137/1.9781611971538}
\BIBentrySTDinterwordspacing

\bibitem{i06Fill}
\BIBentryALTinterwordspacing
G.~Rei{\ss}ig, ``Local fill reduction techniques for sparse symmetric linear
  systems,'' \emph{Arch. Elektrotech.}, vol.~89, no.~8, pp. 639--652, Sep.
  2007. [Online]. Available: \url{http://dx.doi.org/10.1007/s00202-006-0042-2}
\BIBentrySTDinterwordspacing

\bibitem{i07Fill}
\BIBentryALTinterwordspacing
G.~Rei{\ss}ig, ``Fill reduction techniques for circuit simulation,''
  \emph{Arch. Elektrotech.}, vol.~90, no.~2, pp. 143--146, Dec. 2007. [Online].
  Available: \url{http://dx.doi.org/10.1007/s00202-007-0061-7}
\BIBentrySTDinterwordspacing

\bibitem{Gustavson78}
\BIBentryALTinterwordspacing
F.~G. Gustavson, ``Two fast algorithms for sparse matrices: Multiplication and
  permuted transposition,'' \emph{ACM Trans. Math. Softw.}, vol.~4, no.~3, pp.
  250--269, Sep. 1978. [Online]. Available:
  \url{http://doi.acm.org/10.1145/355791.355796}
\BIBentrySTDinterwordspacing

\bibitem{Saad94}
Y.~Saad, ``Sparskit: a basic tool kit for sparse matrix computations - version
  2,'' 1994.

\bibitem{i96iscas}
\BIBentryALTinterwordspacing
G.~Rei{\ss}ig and U.~Feldmann, ``Computing the generic index of the circuit
  equations of linear active networks,'' in \emph{Proc. 1996 IEEE Int. Symp. on
  Circuits and Systems (\nobreak{ISCAS}), Atlanta, GA, U.S.A., May 12-15}, vol.
  III, 1996, pp. 190--193. [Online]. Available:
  \url{http://dx.doi.org/10.1109/ISCAS.1996.541512}
\BIBentrySTDinterwordspacing

\bibitem{i99ijcta}
\BIBentryALTinterwordspacing
G.~Rei{\ss}ig, ``Extension of the normal tree method,'' \emph{Internat. J.
  Circuit Theory Appl.}, vol.~27, no.~2, pp. 241--265, 1999,
  \url{http://www.reiszig.de/gunther/pubs/i99ijcta.abs.html}, Erratum in vol.
  28, no. 1, 2000, p. 99. [Online].
\BIBentrySTDinterwordspacing

\bibitem{i03diagnosis}
\BIBentryALTinterwordspacing
G.~Rei{\ss}ig and U.~Feldmann, ``A simple and general method for detecting
  structural inconsistencies in large electrical networks,'' \emph{IEEE Trans.
  Circuits Systems I Fund. Theory Appl.}, vol.~50, no.~11, pp. 1482--1485, Nov.
  2003, author's file:
  \url{http://www.reiszig.de/gunther/pubs/i03diagnosis.abs.html}. [Online].
  Available: \url{http://dx.doi.org/10.1109/TCSI.2003.818620}
\BIBentrySTDinterwordspacing

\end{thebibliography}
\end{document}